\DeclareMathOperator\hind{Hind}
\DeclareMathOperator\add{add}
\DeclareMathOperator\pow{pow_2}
\theoremstyle{plain}
\newtheorem{theorem}{Theorem}[section]
\begin{document}

\title{Finite Sums of Arithmetic Progressions}
\author{Shahram Mohsenipour}

\address{School of Mathematics, Institute for Research in Fundamental Sciences (IPM)
        P. O. Box 19395-5746, Tehran, Iran}\email{sh.mohsenipour@gmail.com}


\subjclass [2010] {05D10}
\keywords{van der Waerden's theorem, Hindman's theorem, finitary Hindman's theorem}
\begin{abstract} We give a purely combinatorial proof for a two-fold generalization of van der Waerden-Brauer's theorem and Hindman's theorem. We also give tower bounds for a finite version of it.
\end{abstract}
\maketitle
\bibliographystyle{amsplain}
\section{Introduction}
Let $l\geq 3$ be a positive integer and let $P=\{a_1,\dots,a_l\}$ be an $l$-term arithmetic progression with $a_1<\dots<a_l$, we denote the $s$th term of $P$ by $P[s]=a_s$. Now let $P$ and $Q$ be two $l$-term arithmetic progressions, we define their pointwise sum (or briefly their sum) $P\oplus Q$ as the $l$-term arithmetic progression with $P\!\oplus\! Q \,[s]=P[s]+Q[s]$ for $1\leq s\leq l$. Hence for the $l$-term arithmetic progressions $P_1,P_2,\dots,P_m$, their finite sum $P_1\oplus P_2\oplus\dots \oplus P_m$ has unambiguous meaning. The following pleasant two-fold generalization of van der Waerden-Brauer's theorem and Hindman's theorem, can be deduced form either Furstenberg's theorem (\cite{furs} Proposition 8.2.1) or Deuber-Hindman's theorem \cite{dubhind}. \\

{\em For any positive integers $c$ and $l\geq3$, if $\mathbb{N}$ is $c$-colored, then there exist a color $\gamma$ and infinitely many $l$-term arithmetic progressions $Q_i$, $i\in\mathbb{N}$ such that all of their finite sums (with no repetition) are monochromatic with the color $\gamma$ and all the common differences of the above finite sums have also the color $\gamma$ too}.\\

It would be pleasant too if we have a purely combinatorial proof of such a statement avoiding topological dynamics as well as the theory of ultrafilters. In Theorems \ref{maintheorem} and \ref{secondtheorem} of this paper we give such a proof. It is interesting to see whether the method of the proof can be generalized to give a combinatorial proof of Deuber-Hindman's theorem \cite{dubhind}. We are also interested in a finite version of the above theorem. It is well known that through a compactness argument we can have a finite version. For instance we have the following theorem which is a two-fold generalization of van der Waerden's theorem and a finite version of Hindman's theorem.\\

{\em For positive integers $c,n$ and $l\geq3$ there is a positive integer $m$ such that whenever $\{1,2,\dots,m\}$ is $c$-colored, then there exist $l$-term arithmetic progressions $P_1,P_2,\dots,P_n\subset\{1,2,\dots,m\}$ such that $\sum_{i=1}^{n}P_i[l]$ is not bigger than $m$ and all finite sums of $P_i$ (with no repetition) are monochromatic with the same color.}\\

If we denote the least such $m$ by $f(l,n,c)$ then the proof given through the compactness argument does not give us upper bounds for $f(l,n,c)$. But it is not hard to see that the proof given for Theorem \ref{maintheorem} can be made finitary (which may be regarded as an advantage of the proof over its counterparts using dynamical system or ultrafilters) to give us a primitive recursive upper bound for $f(l,n,c)$. To do so we use the finitary Hindman numbers $\textrm{Hind}(n,c)$ which is a tower function \cite{dodos}. However due to its iterated use of the function $\textrm{Hind}(n,c)$, it gives us an upper bound belonging to the class of WOW functions \cite{grs}. In Theorem \ref{thirdtheorem}, we do a better job by giving a different proof which uses the function $\textrm{Hind}(n,c)$ just one time and thus obtaining tower bounds for $f(l,n,c)$. Also note that according to the Gowers elementary bounds for the van der Waerden theorem, we don't worry about the van der Waerden part of the proof.

\section{Preliminaries}
Let's fix some notations. For $n$ a positive integer put $[n]=\{1,2,\dots,n\}$. Let $S$ be an infinite set, we denote the collection of finite nonempty subsets of $S$ by $\mathcal{P}_{f}(S)$. For a finite set $A$, $\mathcal{P}^+(A)$ denotes the collection of nonempty subsets of $A$. Also $FS(S)$ will denote the set of all finite sums of elements of $S$ with no repetition. Let $A,B\in\mathcal{P}_{f}(\mathbb{N})$, by $A<B$ we mean that $\max A < \min B$. We also denote the common difference of the arithmetic progression $P$ by $\add{P}$. We use the following notation for finite sums of arithmetic progressions
\[
\displaystyle\bigoplus_{i\in B}P_i=P_1\oplus P_2\oplus\dots \oplus P_m
\]
where $B=\{1,2,\dots,m\}$. Obviously we have
\[
\displaystyle\bigoplus_{i\in B}\!P_i\,[s]=\displaystyle\sum_{i\in B} P_i[s].
\]
We define a partial ordering between $l$-term arithmetic progressions by putting $P\prec Q$ whenever $P[s]<Q[s]$ for all $1\leq s\leq l$.
Let's state van der Waerden's theorem and van der Waerden-Brauer's theorem \cite{grs}.

\begin{theorem}[van der Waerden]
For positive integers $c$ and $l\geq 3$ there is a positive integer $n$ such that whenever $[n]$ is $c$-colored, then there is a monochromatic $l$-term arithmetic progression $P\subseteq[n]$. We denote the least such $n$ by $W(l,c)$.
\end{theorem}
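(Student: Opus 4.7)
The plan is to prove the theorem by induction on $l$, carried out through the classical ``color focusing'' technique. The bases $l=1$ and $l=2$ are trivial, giving $W(2,c)=c+1$ by the pigeonhole principle. Fix $c$ and assume $W(l,c')$ is finite for every $c'$; the task is to show $W(l+1,c)$ is finite.

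The engine of the argument is an auxiliary statement proved by an inner induction on $k\le c+1$: there exists $N(k)$ such that every $c$-coloring of $[N(k)]$ contains $k$ monochromatic $l$-term APs $Q_1,\dots,Q_k$, in $k$ pairwise distinct colors, sharing a common focus $z\in[N(k)]$ in the sense that $Q_j[l]+\add{Q_j}=z$ for every $j$. Applied with $k=c+1$, pigeonhole forces two of the focused APs to be of the same color; appending $z$ to either one produces a monochromatic $(l+1)$-term AP, so $W(l+1,c)\le N(c+1)$.

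For the base $k=1$ I take $N(1)=2W(l,c)$, so that the $l$-AP supplied by the outer hypothesis on the left half still has its focus inside $[N(1)]$. For the step $k\to k+1$, I would partition $[N(k+1)]$ into $W(l,c^{2N(k)})$ consecutive blocks of length $2N(k)$ and color each block by its internal $c$-coloring pattern (one of $c^{2N(k)}$ possibilities). The outer hypothesis then supplies an $l$-term AP of blocks all carrying the same internal pattern; inside any one of them the inner hypothesis yields $k$ focused $l$-APs $R_1,\dots,R_k$ in distinct colors, focused at some $z_0$. Translating each $R_j$ stepwise across the $l$ chosen blocks produces $k$ focused $l$-APs in $[N(k+1)]$ sharing a new focus $z^*$, and the ``diagonal'' $l$-AP consisting of the $l$ copies of $z_0$ across those blocks is itself monochromatic (since the blocks share their coloring pattern) and focused at the same $z^*$. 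If its color coincides with one of the $R_j$'s we already have the pigeonhole collision and are done; otherwise we have manufactured $k+1$ focused APs in $k+1$ distinct colors, closing the inner step.

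The main obstacle is the bookkeeping: the block length $2N(k)$ and the number $W(l,c^{2N(k)})$ of blocks must be coordinated so that the common focus $z^*$ actually lies inside $[N(k+1)]$, and one must verify that the translated $R_j$'s together with the diagonal AP genuinely share the single focus $z^*$. The resulting estimate on $W(l,c)$ is primitive recursive but grows at Ackermann speed; the much sharper tower-type bound is Gowers' theorem, to which the end of the introduction defers for the van der Waerden content of the paper.
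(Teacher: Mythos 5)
The paper does not actually prove this statement: van der Waerden's theorem is quoted as classical background (with a citation to Graham--Rothschild--Spencer), and the colour-focusing induction you outline is precisely the standard proof from that literature, so your route is the ``intended'' one. However, as written your argument has two genuine defects. First, your auxiliary claim is stated too strongly: ``every $c$-colouring of $[N(k)]$ contains $k$ monochromatic $l$-term APs in $k$ pairwise distinct colours with a common focus'' is literally false for $k\ge 2$ (a constant colouring contains no two monochromatic APs of distinct colours), so the inner induction cannot close on that statement. The claim must be disjunctive: every $c$-colouring of $[N(k)]$ contains either a monochromatic $(l+1)$-term AP or $k$ colour-focused $l$-term APs with focus inside $[N(k)]$. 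Your own inductive step betrays this: when the diagonal AP has the same colour as some $R_j$ you declare yourself ``done'', but what you have found at that moment is a monochromatic $(l+1)$-term AP (append $z_0$ to $R_j$), which is not an instance of the claim you set out to prove unless the escape clause is part of its statement.

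Second, the endgame is wrong as stated. With only $c$ colours there cannot exist $c+1$ focused APs in pairwise distinct colours, so ``applying the claim with $k=c+1$'' is incoherent under your formulation; and even if you drop distinctness, two focused APs of the \emph{same} colour do not yield a monochromatic $(l+1)$-term AP by appending the focus $z$ --- you need $z$ itself to carry the colour of one of the spokes. The correct finish is to apply the disjunctive claim with $k=c$: if there is no monochromatic $(l+1)$-term AP, you obtain $c$ colour-focused APs exhausting all $c$ colours, the colour of $z$ agrees with one of them, and appending $z$ to that AP concludes (equivalently, with the disjunctive claim the case $k=c+1$ forces the first alternative because the second is unsatisfiable). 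A smaller point you flag but do not resolve: with $N(k+1)=2N(k)\,W(l,c^{2N(k)})$ the new focus $z^{*}=z_0+l\cdot 2N(k)s$ can exceed $N(k+1)$, since the outer van der Waerden application only guarantees $r+(l-1)s\le W(l,c^{2N(k)})$; doubling the number of blocks, or insisting that the same-pattern blocks lie in the first half, repairs this bookkeeping.
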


\begin{theorem}[van der Waerden-Brauer]
For positive integers $c$ and $l\geq 3$ there is a positive integer $n$ such that whenever $\mathbf{c}$ is a $c$-coloring of $[n]$, then there are $d, a, a+d,\dots, a+(l-1)d$ in $\{1,2,\dots,n\}$ such that
\[
\mathbf{c}(d)=\mathbf{c}(a)=\mathbf{c}(a+d)=\dots=\mathbf{c}(a+(l-1)d).
\]
We denote the least such $n$ by $WB(l,c)$.
\end{theorem}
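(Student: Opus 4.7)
The plan is to prove the theorem by induction on the number of colors $c$, using the already-stated van der Waerden theorem as the engine in the inductive step. The base case $c=1$ is immediate: the coloring is constant, so $\{1,2,\dots,l\}$ itself is a Brauer configuration and $WB(l,1)\le l$.

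For the inductive step I will set $K=WB(l,c-1)$ and $N=W((l-1)K+1,c)$. Given any $c$-coloring of $[N]$, van der Waerden's theorem produces a monochromatic arithmetic progression $a,a+d,a+2d,\dots,a+(l-1)Kd$ of some color $\gamma$. I then examine the $K$ multiples $d,2d,\dots,Kd$, all of which lie in $[N]$. If some $kd$ with $1\le k\le K$ happens to have color $\gamma$, then $a,a+kd,a+2kd,\dots,a+(l-1)kd$ sits inside the given monochromatic progression (since $(l-1)k\le(l-1)K$), and its common difference $kd$ also has color $\gamma$; this is already the required Brauer configuration.

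Otherwise none of $d,2d,\dots,Kd$ carries color $\gamma$, so the induced coloring on these $K$ numbers uses only $c-1$ colors. Pulling this back along the dilation $i\mapsto id$ yields a $(c-1)$-coloring of $[K]$, and the inductive hypothesis produces $a_0,d_0\in[K]$ with $a_0,a_0+d_0,\dots,a_0+(l-1)d_0$ and $d_0$ all sharing a single color. Dilating forward we recover the Brauer configuration $a_0d,(a_0+d_0)d,\dots,(a_0+(l-1)d_0)d$ with common difference $d_0d$ inside $[N]$.

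The only point that needs genuine care, and is really the main obstacle, is calibrating the van der Waerden length: the choice $(l-1)K+1$ must be large enough that (i) for every candidate step $kd$ with $k\le K$ the $l$-term sub-progression with that step still fits inside the monochromatic progression, and (ii) all the multiples $d,2d,\dots,Kd$ themselves lie in $[N]$. Both conditions hold for this choice, yielding the recursive bound $WB(l,c)\le W((l-1)WB(l,c-1)+1,c)$, which is primitive recursive in $c$ granted the Gowers bound for van der Waerden.
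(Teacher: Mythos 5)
Your proof is correct: the induction on the number of colors, with $K=WB(l,c-1)$ and $N=W((l-1)K+1,c)$, together with the case split on whether some multiple $kd$ ($1\le k\le K$) of the van der Waerden difference carries the color $\gamma$, is exactly the classical argument for Brauer's theorem, and the length calibration $(l-1)K+1$ does ensure both that $a+(l-1)kd$ stays inside the monochromatic progression and that $Kd\le N$. The paper itself gives no proof of this statement --- it is quoted with a citation to Graham--Rothschild--Spencer --- and your argument is essentially the standard one found there, so you have simply supplied the omitted proof, along with the correct recursive bound $WB(l,c)\le W((l-1)WB(l,c-1)+1,c)$.
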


We will use the following strong version of Hindman's theorem \cite{taylor2}.

\begin{theorem}
Let $a_1<a_2<\dots<a_m<\dots$ be an infinite strictly increasing sequence of positive integers. Let $c$ be a positive integer and $FS(\{a_1,a_2,\dots\})$ be $c$-colored. Then there are $B_1<B_2<B_3<\dots$ in $\mathcal{P}_{f}(\mathbb{N})$ such that whenever
\[
b_1=\displaystyle\sum_{i\in B_1}a_i\,\,,\,\,\, b_2=\displaystyle\sum_{i\in B_2}a_i\,\,,\,\,\,\dots\,\,\,, b_m=\displaystyle\sum_{i\in B_m}a_i\,\,,\dots
\]
then $FS(\{b_1,b_2,\dots\})$ is monochromatic.
\end{theorem}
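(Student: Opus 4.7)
My plan is to deduce this block-sum formulation directly from the ordinary Hindman theorem, which asserts that for any finite coloring of $FS(S)$ on an infinite set $S\subset\mathbb{N}$ there is an infinite $T\subseteq S$ with $FS(T)$ monochromatic. First, I would apply the ordinary theorem to the given $c$-coloring restricted to $FS(\{a_1,a_2,\dots\})$: this yields indices $i_1<i_2<\dots$ and a color $\gamma$ such that every element of $FS(\{a_{i_1},a_{i_2},\dots\})$ is colored $\gamma$.

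Next I would simply set $B_k=\{i_k\}$ for each $k\geq 1$, so that $B_1<B_2<\dots$ lie in $\mathcal{P}_f(\mathbb{N})$ and $b_k=\sum_{i\in B_k}a_i=a_{i_k}$. Then
\[
FS(\{b_1,b_2,\dots\})=FS(\{a_{i_1},a_{i_2},\dots\})
\]
is monochromatic of color $\gamma$, as required. More generally, any choice $B_1<B_2<\dots$ of pairwise-disjoint consecutive finite blocks drawn from $\{i_1,i_2,\dots\}$ would also work, because the disjointness forces each finite sum $\sum_{k\in F}b_k$ to equal $\sum_{i\in\bigcup_{k\in F}B_k}a_i$, which sits inside the already-monochromatic set $FS(\{a_{i_1},a_{i_2},\dots\})$. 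This extra flexibility in choosing the $B_k$'s is precisely what distinguishes the ``strong'' block-sum form from the classical statement, and it is obtained for free once the singleton case is handled.

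The only genuine obstacle is therefore the ordinary Hindman theorem itself, which I would invoke as a black box. If one wanted a self-contained argument, one could appeal to Baumgartner's purely combinatorial proof (via almost-disjoint IP-sets) or to the Galvin--Glazer proof using idempotent ultrafilters on $\beta\mathbb{N}$; in either case, the additional packaging into blocks $B_k$ introduces no further difficulty, since the monochromaticity of $FS(T)$ for the infinite set $T$ produced by Hindman's theorem is preserved under any grouping of elements of $T$ into disjoint consecutive blocks.
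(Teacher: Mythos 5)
There is a genuine gap, and it is exactly at the step you treat as routine. The statement you invoke as ``the ordinary Hindman theorem'' --- that for any finite coloring of $FS(S)$, $S\subseteq\mathbb{N}$ infinite, there is an infinite $T\subseteq S$ with $FS(T)$ monochromatic --- is not Hindman's theorem and is in fact false. Hindman's theorem produces an infinite set $T$ whose elements lie in $FS(S)$ (each being a sum of $a_i$'s over a block), not a subset of $S$ itself. For a concrete counterexample to your version, take $a_i=2^i$, so that every element of $FS(\{a_1,a_2,\dots\})$ has a unique representation as a sum of distinct $a_i$'s, and $2$-color $FS(\{a_1,a_2,\dots\})$ by the parity of the number of summands (equivalently, the number of ones in binary). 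If $T=\{a_{i_1},a_{i_2},\dots\}$ were an infinite subset of $S$, then $a_{i_1}$ gets the color ``odd'' while $a_{i_1}+a_{i_2}$ gets ``even'', so $FS(T)$ is never monochromatic; yet the block form holds (take all $B_k$ of even size). So your reduction to singletons $B_k=\{i_k\}$ cannot work, and the ``extra flexibility'' you describe is not extra at all: passing to sums over disjoint increasing blocks is the entire content of the quoted strong version, which the paper takes from Taylor rather than proving.

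To repair the argument you need a statement that already carries the block structure. The standard route is the finite unions form of Hindman's theorem: color $\mathcal{P}_f(\mathbb{N})$ by $\mathbf{c}'(F)=\mathbf{c}\bigl(\sum_{i\in F}a_i\bigr)$ (well defined since the coloring is given on all of $FS(\{a_1,a_2,\dots\})$), obtain pairwise disjoint $B_1<B_2<\cdots$ all of whose finite unions get the same $\mathbf{c}'$-color, and observe that for disjoint blocks every finite sum of the $b_k=\sum_{i\in B_k}a_i$ equals $\sum_{i\in D}a_i$ with $D$ the corresponding finite union, so $FS(\{b_1,b_2,\dots\})$ is monochromatic. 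Alternatively, in the Galvin--Glazer approach one must choose an idempotent ultrafilter containing every tail $FS(\{a_i:i\geq n\})$ (such idempotents exist because these sets generate a closed subsemigroup of $\beta\mathbb{N}$); the usual inductive construction then yields the $b_k$'s as sums over blocks with $B_1<B_2<\cdots$. Either way, the blocks are forced by the proof, not an optional afterthought.
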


We say that the two positive integers $a,b$ are {\em power-disjoint}, if the powers occurring in the expansions of $a,b$ in base $2$ are disjoint sets, more precisely if we write $a=2^{k_1}+\dots+2^{k_m}$ and $b=2^{l_1}+\dots+2^{l_n}$, then the two sets $\{k_1,\dots,k_m\}$ and $\{l_1,\dots,l_n\}$ are disjoint. We denote the set $\{k_1,\dots,k_m\}$ by $\pow(a)$. We will use the following finitary version of Hindman's theorem \cite{dodos} which strengthens the Disjoint Unions Theorem. First we introduce a notation. If $T$ is a collection of pairwise disjoint sets, then $NU(T)$ will denote the set of non-empty unions of elements of $T$.

\begin{theorem}\label{dut}
For positive integers $n,c$ there is a positive integer $m$ such that for any $m$-element set $A=\{a_1,\dots,a_m\}$ of pairwise power-disjoint positive integers, whenever $\mathbf{c}$ is a $c$-coloring of $FS(A)$, then there exist $\gamma\in[c]$ and $B_1,\dots,B_n$ in $\mathcal{P}^+([m])$ such that
$B_1<\cdots<B_n$ and for all $C\in NU\{B_1,\dots,B_n\}$ we have
\[
\mathbf{c}\big{(}\displaystyle\sum_{i\in C}a_i\big{)}=\gamma.
\]
Moreover if $Hind(n,c)$ denotes the least such $m$, then $Hind(n,c)$ is a tower function.
\end{theorem}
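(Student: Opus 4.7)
The plan is to reduce the statement to a finite disjoint unions theorem via the power-disjointness hypothesis. The key observation is that when the $a_i$ are pairwise power-disjoint, addition simulates disjoint union: for any nonempty $C \subseteq [m]$ one has $\pow(\sum_{i \in C} a_i) = \bigsqcup_{i \in C} \pow(a_i)$, so the map $C \mapsto \sum_{i \in C} a_i$ is an injection from $\mathcal{P}^+([m])$ into $\mathbb{N}$ whose image is exactly $FS(A)$. Pulling a $c$-coloring $\mathbf{c}$ of $FS(A)$ back through this map produces a $c$-coloring $\mathbf{c}^*$ of $\mathcal{P}^+([m])$, and the goal becomes to find $B_1 < \cdots < B_n$ in $\mathcal{P}^+([m])$ on whose nonempty unions $\mathbf{c}^*$ is constant.

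This reduces the theorem to the finite Folkman (finite disjoint unions) statement: for any $n$ and $c$ there is $m$ such that every $c$-coloring of $\mathcal{P}^+([m])$ admits $B_1 < \cdots < B_n$ with monochromatic $NU\{B_1,\dots,B_n\}$. I would prove the finite statement by induction on $n$. The case $n=1$ is immediate. For the inductive step I would choose $m$ much larger than the value required for $n-1$, partition $[m]$ into consecutive blocks, and use a head/tail focusing argument together with pigeonhole to reduce to a configuration in which the color of a union depends only on its "shape" relative to a fixed block structure; the inductive hypothesis applied to the tail would then supply the remaining $n-1$ blocks.

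The main obstacle is obtaining tower-type bounds for $\hind(n,c)$ rather than the iterated-tower (WOW-class) growth that a naive induction produces. To achieve this I would follow the strategy of \cite{dodos}: strengthen the inductive invariant to control the coloring on Milliken-Taylor families of block sequences at several scales simultaneously, and verify that passing from $n$ to $n+1$ inflates the bound by at most a single exponential. The delicate point is ensuring that the additional structure demanded at stage $n+1$ can be located inside the configuration produced at stage $n$, since a careless reduction that removes one color per inductive step would overshoot a tower. Once the inductive step is shown to cost only one exponential, iterating $n$ times yields the claimed tower bound on $\hind(n,c)$.
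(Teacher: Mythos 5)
Your first step is fine, but it is also essentially cost-free: when the $a_i$ are pairwise power-disjoint there are no carries in base $2$, so $\pow\big(\sum_{i\in C}a_i\big)=\bigcup_{i\in C}\pow(a_i)$ and the map $C\mapsto\sum_{i\in C}a_i$ is a bijection from $\mathcal{P}^+([m])$ onto $FS(A)$; hence the statement is literally equivalent to the finite disjoint unions theorem with the same value of $m$. This is exactly why the theorem is described as strengthening the Disjoint Unions Theorem, and note that the paper itself does not prove this result at all: it imports it, together with the tower bound on $\hind(n,c)$, from \cite{dodos}. So the only real mathematical content you would have to supply is the quantitative part, namely that the finite unions numbers admit a tower-type upper bound.

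That is precisely where your proposal has a genuine gap. Your sketch of the inductive step (partition $[m]$ into blocks, a ``head/tail focusing argument together with pigeonhole'' so that the color of a union depends only on its shape, then apply the inductive hypothesis to the tail) is the naive argument, and as you yourself concede, bookkeeping it carelessly inflates the bound far beyond a tower: pigeonholing over all colorings involving the head multiplies the number of colors by a quantity depending on the previous bound, and iterating this $n$ times lands in the WOW class, not the tower class. The fix you propose --- ``strengthen the inductive invariant to control the coloring on Milliken--Taylor families of block sequences at several scales simultaneously, and verify that passing from $n$ to $n+1$ inflates the bound by at most a single exponential'' --- is a statement of intent, not an argument: no invariant is formulated, no construction is given, and the crucial claim that one step costs only one exponential is asserted rather than verified. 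Since the tower bound is the entire point of the ``moreover'' clause (and is what Theorem \ref{thirdtheorem} of the paper relies on), deferring it to ``the strategy of \cite{dodos}'' means your write-up proves only the qualitative finite unions theorem (which already follows from Hindman's theorem by compactness) and not the quantitative statement at issue. To close the gap you must either carry out the strengthened induction in detail with explicit bounds, or simply cite \cite{dodos} for the finite unions theorem with tower bounds and present your power-disjointness reduction as the (short, correct) bridge to the form stated here.
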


\section{Purely Combinatorial Proofs}
In the following theorem we give a purely combinatorial proof of the two-fold generalization van der Waerden's theorem and Hindman's theorem mentioned in the introduction.
\begin{theorem}\label{maintheorem}
Let $c$ and $l\geq 3$ be positive integers. Let $\mathbf{c}$ be a $c$-coloring of $\mathbb{N}$, then there are $l$-term arithmetic progressions $Q_1, Q_2,Q_3,\dots$ such that
\begin{itemize}
\item[(i)] $Q_1\prec Q_2\prec Q_3\prec\cdots$,
\item[(ii)] there is $\gamma\in[c]$ such that for all $C\in\mathcal{P}_{f}(\mathbb{N})$ and all $s\in\{1,\dots,l\}$ we have
\[
\mathbf{c}\big{(}\displaystyle\bigoplus_{i\in C}Q_i\,[s]\big{)}=\gamma.
\]
\end{itemize}
\end{theorem}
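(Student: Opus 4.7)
My plan is to combine Hindman's theorem with van der Waerden's theorem, iteratively, to produce a monochromatic IP-set backbone inside which every pointwise finite sum of the $Q_i$'s will live.

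First, apply Hindman's theorem to the coloring $\mathbf{c}$ of $\mathbb{N}$ to obtain an infinite sequence $a_1 < a_2 < \cdots$ such that $FS(\{a_j\})$ is monochromatic of some color $\gamma$; this $\gamma$ will be the distinguished color in the conclusion. The strategy is to manufacture $Q_i$ whose coordinates $\bigoplus_{i \in C} Q_i\,[s]$ always lie in $FS(\{a_j\})$ and are therefore automatically colored $\gamma$.

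I would then construct the $Q_i$ inductively, allocating to each $Q_i$ a finite block of indices $B_i \subset \mathbb{N}$ chosen in strict increasing order $B_1 < B_2 < \cdots$, and expressing each coordinate as a subset sum $Q_i[s] = \sum_{k \in I_{i,s}} a_k$ with $I_{i,s} \subset B_i$. Since the blocks are disjoint, the sets $\{I_{i,s}\}_{i \geq 1}$ are pairwise disjoint for each fixed $s$, so that
\[
\bigoplus_{i \in C} Q_i\,[s] \;=\; \sum_{k \in \bigcup_{i \in C} I_{i,s}} a_k \;\in\; FS(\{a_j\}),
\]
which verifies clause (ii). Taking the $a_k$'s sufficiently spread out also ensures $Q_1 \prec Q_2 \prec \cdots$, as required in clause (i).

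The main obstacle is that at each inductive stage one must choose the subsets $I_{i,1}, \ldots, I_{i,l} \subset B_i$ so that the subset sums $\sum_{k \in I_{i,s}} a_k$ form an $l$-term arithmetic progression in $s$. I would handle this by a van der Waerden-type argument on the additive structure generated by $\{a_k : k \in B_i\}$, enlarging $|B_i|$ as needed; when necessary I would first refine the tail of the Hindman sequence by an application of the strong Hindman theorem (Taylor's version) so that the next block has a sufficiently rich subset-sum structure. This inductive ``AP-carving'' inside the IP set, with a fresh Hindman refinement once per $Q_i$, is the source of the iterated application of $\hind(n,c)$ and explains why the direct finitary reading gives only WOW bounds, as noted in the introduction.
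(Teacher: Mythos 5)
There is a genuine gap, and it sits exactly at the step you flag as the ``main obstacle.'' Your plan forces every coordinate of every $Q_i$ to be a subset sum of the Hindman generators, so you need, inside each block $B_i$, subsets $I_{i,1},\dots,I_{i,l}$ whose sums $\sum_{k\in I_{i,s}}a_k$ form an increasing $l$-term arithmetic progression. But the set of subset sums of a block of the $a_k$'s need not contain \emph{any} nontrivial $3$-term progression, and Hindman's theorem gives you no control over this: if the generators are lacunary, say behave like $a_k=4^k$, then every subset sum has only digits $0,1$ in base $4$, and $x+z=2y$ with all three of this form forces $x=y=z$ digit by digit (no carries occur), so $FS(\{a_k\})$ is AP-free. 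Passing to a sub-IP-set via Taylor's theorem cannot repair this, since a sub-IP-set of an AP-free IP set is still AP-free, and there is no ``van der Waerden-type argument on the additive structure generated by $\{a_k:k\in B_i\}$'': van der Waerden produces monochromatic progressions in colorings of an interval $[N]$, not progressions inside an arbitrary sparse set of subset sums. So the AP-carving step is not merely technical; in general the objects you want to carve do not exist, and making them exist is essentially the content of the theorem you are trying to prove.

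The paper's proof avoids this trap by never asking for progressions inside an IP set. It starts with long progressions $P_i^0=\{a_i,a_i+1,\dots,a_i+(n-1)\}$ of length $n=W(l,c)$ (pointwise sums of such progressions are again progressions), and then iterates Taylor's strong Hindman theorem once per \emph{coordinate} $s=1,\dots,n$: at stage $k+1$ it regroups, setting $P_j^{k+1}=\bigoplus_{i\in B_j}P_i^k$, so that the $(k+1)$-st coordinates of all finite sums become monochromatic of some color $\alpha_{k+1}^{k+1}$ while the colors $\alpha_1^k,\dots,\alpha_k^k$ of the earlier coordinates are preserved. After $n$ stages every coordinate $s$ of every finite sum has a fixed color $\alpha_s^n$, and van der Waerden is applied to the induced coloring $s\mapsto\alpha_s^n$ of the \emph{positions} $[n]$, yielding positions $a,a+d,\dots,a+(l-1)d$ of a common color $\gamma$; the $Q_i$ are the restrictions of the $P_i^n$ to these positions, which are automatically arithmetic progressions because each $P_i^n$ is one. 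That is the idea missing from your sketch: apply van der Waerden to the coordinates, not to the values inside the Hindman set. (As a side remark, the iterated use of Hindman in the paper is once per coordinate, i.e.\ $W(l,c)$ times, not once per $Q_i$ as your last paragraph suggests.)
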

\begin{proof}
Let $n=W(l,c)$ and let $a_1<a_2<\cdots<a_m<\cdots$ be a strictly increasing sequence of positive integers with $a_{m+1}>a_1+\dots+a_m+mn$.  For $i\in\mathbb{N}$ we put
\[
P^{0}_{i}=\{a_i, a_i+1,\dots,a_i+(n-1)\}.
\]
Obviously $P^{0}_{i}$ is an $n$-term arithmetic progression and we have
\[
P^{0}_{1}\prec P^{0}_{2}\prec P^{0}_{3}\prec\cdots
\]
In fact it is easily seen that for any $C_1<C_2$ in $\mathcal{P}_{f}(\mathbb{N})$ we have
\begin{equation}\label{new}
\displaystyle\bigoplus_{i\in C_1}P_i^0\prec\displaystyle\bigoplus_{i\in C_2}P_i^0.
\end{equation}

Now for $1\leq k\leq n$ we inductively define the $n$-term arithmetic progressions $P_1^k,P_2^k,P_3^k,\ldots$ so that there are $\alpha_1^k,\alpha_2^k,\dots,\alpha_k^k\in[c]$ such that the following two conditions are satisfied
\begin{itemize}
\item[(a)] for all $C\in\mathcal{P}_{f}(\mathbb{N})$ and all $s\in\{1,\dots,k\}$ we have
\[
\mathbf{c}\big{(}\displaystyle\bigoplus_{i\in C}P_i^k\,[s]\big{)}=\alpha_s^k,
\]
\item[(b)] for all $C_1<C_2$ in $\mathcal{P}_{f}(\mathbb{N})$ we have
\[
\displaystyle\bigoplus_{i\in C_1}P_i^k\prec\displaystyle\bigoplus_{i\in C_2}P_i^k.
\]
\end{itemize}
Suppose we have defined $P_1^k,P_2^k,P_3^k,\ldots$ with the above properties. We do the job for $k+1$. The second condition implies that
\[
P_1^k[k+1]<P_2^k[k+1]<\cdots<P_m^k[k+1]<\cdots\cdot
\]
Now by Hindman's theorem there are $B_1<B_2<\cdots<B_m<\cdots$ in $\mathcal{P}_{f}(\mathbb{N})$ such that if we put
\[
b_1=\displaystyle\sum_{i\in B_1}P^k_i[k+1],b_2=\displaystyle\sum_{i\in B_2}P^k_i[k+1],\ldots,b_m=\displaystyle\sum_{i\in B_m}P^k_i[k+1],\dots
\]
then $\mathbf{c}$ has a constant value on $FS(\{b_1,b_2,\dots\})$, which we denote it by $\alpha$. Now we set
\[
P_1^{k+1}=\displaystyle\bigoplus_{i\in B_1} P_i^{k}, P_2^{k+1}=\displaystyle\bigoplus_{i\in B_2} P_i^{k},\ldots, P_m^{k+1}=\displaystyle\bigoplus_{i\in B_m} P_i^{k},\ldots.
\]
as well as we set
\[
\alpha_{1}^{k+1}=\alpha_{1}^{k}, \alpha_{2}^{k+1}=\alpha_{2}^{k},\dots, \alpha_{k}^{k+1}=\alpha_{k}^{k},\alpha_{k+1}^{k+1}=\alpha.
\]
We check the conditions (a) and (b) for $k+1$. Let $C\in\mathcal{P}_{f}(\mathbb{N})$ and $1\leq s\leq k+1$, hence we have
\[
\displaystyle\bigoplus_{i\in C}P_i^{k+1}\,[s]=\displaystyle\bigoplus_{i\in C}\displaystyle\bigoplus_{j\in B_i}\!P_j^{k}\,[s]=\displaystyle\bigoplus_{i\in D}P_i^{k}\,[s],
\]
where $D=\bigcup_{i\in C}B_i$. Suppose $1\leq s\leq k$, from the induction hypothesis it follows that
\begin{equation}\label{eq1}
\mathbf{c}\big{(}\displaystyle\bigoplus_{i\in D}P_i^{k}\,[s]\big{)}=\alpha_s^k=\alpha_s^{k+1}.
\end{equation}
Also for $s=k+1$ we have
\[
\displaystyle\bigoplus_{i\in C}\displaystyle\bigoplus_{j\in B_i}\!P_j^{k}\,[k+1]=\displaystyle\sum_{i\in C}\displaystyle\sum_{j\in B_i} P_j^{k}[k+1]=\displaystyle\sum_{i\in C}b_i\in FS(\{b_1, b_2,\dots\}),
\]
which implies that
\begin{equation}\label{eq2}
\mathbf{c}\big{(}\displaystyle\bigoplus_{i\in C}\displaystyle\bigoplus_{j\in B_i}\!P_j^{k}\,[k+1]\big{)}=\mathbf{c}\big{(}\displaystyle\sum_{i\in C}b_i\big{)}=\alpha=\alpha_{k+1}^{k+1}.
\end{equation}
Now putting (\ref{eq1}) and (\ref{eq2}) together we deduce
\[
\mathbf{c}\big{(}\displaystyle\bigoplus_{i\in C}P_i^{k+1}\,[s]\big{)}=\alpha_s^{k+1}
\]
for $1\leq s\leq k+1$. This finishes the proof of the condition (a). Now we turn to checking (b). Let $C_1<C_2$ be in $\mathcal{P}_{f}(\mathbb{N})$. We must show that
\[
\displaystyle\bigoplus_{i\in C_1}P_i^{k+1}\prec\displaystyle\bigoplus_{i\in C_2}P_i^{k+1}
\]
which is equivalent to
\begin{equation}\label{eq3}
\displaystyle\bigoplus_{i\in C_1}\displaystyle\bigoplus_{j\in B_i}\!P_j^{k}\prec\displaystyle\bigoplus_{i\in C_2}\displaystyle\bigoplus_{j\in B_i}\!P_j^{k}.
\end{equation}
Letting $D_1=\bigcup_{i\in C_1}B_i$, $D_2=\bigcup_{i\in C_2}B_i$, we get $D_1<D_2$ and (\ref{eq3}) becomes
\[
\displaystyle\bigoplus_{i\in D_1}P_i^{k}\prec\displaystyle\bigoplus_{i\in D_2}P_i^{k}
\]
which is exactly our induction hypothesis. This proves the condition (b).

Now consider $P_1^n[1],P_1^n[2],\dots,P_1^n[n]$ and recall that $n=W(l,c)$. By construction we have
\[
\mathbf{c}(P_1^n[1])=\alpha^n_1,\dots,\mathbf{c}(P_1^n[n])=\alpha^n_n.
\]
Through induced coloring, it follows from van der Waerden's theorem that there exist $\gamma\in[c]$ and positive integers $a, d$ such that
\[
\alpha^n_a=\alpha^n_{a+d}=\cdots=\alpha^n_{a+(l-1)d}=\gamma.
\]
We define the desire arithmetic progressions $Q_i, i\in\mathbb{N}$ as follows
\[
Q_i=\big{\{}P_i^n[a],P_i^n[a+d],\dots, P_i^n[a+(l-1)d]\big{\}}.
\]
It is easily seen by condition (b) that $Q_1\prec Q_2\prec Q_3\prec \cdots\,\cdot$ Also for all $C\in\mathcal{P}_f(\mathbb{N})$ and all $1\leq s\leq l$ we have
\[
\mathbf{c}\big{(}\displaystyle\bigoplus_{i\in C}\!Q_i\,[s]\big{)}=\mathbf{c}\big{(}\displaystyle\sum_{i\in C}Q_i[s]\big{)}=\mathbf{c}\big{(}\displaystyle\sum_{i\in C}P_i^n[a+(s-1)d]\big{)}=\alpha^n_{a+(s-1)d}=\gamma.
\]
This finishes the proof of Theorem \ref{maintheorem}.
\end{proof}
Now we turn to the two-fold generalization of van der Waerden-Brauer's theorem and Hindman's theorem.
\begin{theorem}\label{secondtheorem}
Let $c$ and $l\geq 3$ be positive integers. Let $\mathbf{c}$ be a $c$-coloring of $\mathbb{N}$, then there are $l$-term arithmetic progressions $Q_1, Q_2,Q_3,\dots$ such that
\begin{itemize}
\item[(i)] $Q_1\prec Q_2\prec Q_3\prec\cdots$,
\item[(ii)] there is $\gamma\in[c]$ such that for all $C\in\mathcal{P}_{f}(\mathbb{N})$ and all $s\in\{1,\dots,l\}$ we have
\[
\mathbf{c}\big{(}\!\displaystyle\bigoplus_{i\in C}\!Q_i\,[s]\big{)}=\mathbf{c}\big{(}\!\add\displaystyle\bigoplus_{i\in C}\!Q_i\big{)}=\gamma.
\]
\end{itemize}
\end{theorem}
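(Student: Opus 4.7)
The plan is to follow the proof of Theorem \ref{maintheorem} as a template, with two key modifications. First, take $n = WB(l,c)$ in place of $W(l,c)$, so that at the very end van der Waerden-Brauer (rather than van der Waerden) is available. Second, modify the base case of the inductive construction so that the invariant $P^k_i[1] = \add P^k_i$ is maintained for every $k$. This single extra invariant is what identifies the color of the common differences with the color of a chosen position, letting the van der Waerden-Brauer conclusion do all the work.

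Concretely, I would define $P^0_i = \{a_i, 2a_i, \ldots, n a_i\}$ with $a_1 < a_2 < \cdots$ growing fast enough (for instance $a_{m+1} > a_1 + \cdots + a_m$), so that condition (b) for finite sums of the $P^0_i$ follows from the identity $\bigoplus_{i \in C} P^0_i\,[s] = s \sum_{i \in C} a_i$. At this base step $P^0_i[1] = a_i = \add P^0_i$, so the invariant holds for $k = 0$. The inductive step from $k$ to $k+1$ is the verbatim one in Theorem \ref{maintheorem}: apply Hindman's theorem to $(P^k_i[k+1])_i$, obtain $B_1 < B_2 < \cdots$, and set $P^{k+1}_j = \bigoplus_{i \in B_j} P^k_i$. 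The invariant propagates automatically because $P^{k+1}_j[1] = \sum_{i \in B_j} P^k_i[1] = \sum_{i \in B_j} \add P^k_i = \add P^{k+1}_j$.

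After $n$ steps we have $P^n_1, P^n_2, \ldots$ carrying the coloring data $\alpha^n_1, \ldots, \alpha^n_n$ of Theorem \ref{maintheorem} together with $P^n_i[1] = \add P^n_i$. I then apply van der Waerden-Brauer to the induced $c$-coloring $\mathbf{c}'(s) = \alpha^n_s$ of $[n]$, obtaining $a, d \in [n]$ and a common color $\gamma = \alpha^n_d = \alpha^n_a = \alpha^n_{a+d} = \cdots = \alpha^n_{a+(l-1)d}$, and define $Q_j = \{P^n_j[a], P^n_j[a+d], \ldots, P^n_j[a+(l-1)d]\}$. Condition (i) and the constancy of $\mathbf{c}(\bigoplus_{j \in C} Q_j\,[s]) = \gamma$ for $1 \le s \le l$ then follow exactly as in Theorem \ref{maintheorem}.

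The only genuinely new verification concerns the common difference. Using $P^n_j[1] = \add P^n_j$, one has $P^n_j[d] = P^n_j[1] + (d-1)\add P^n_j = d \cdot \add P^n_j = \add Q_j$, so
\[
\add \bigoplus_{j \in C} Q_j \;=\; \sum_{j \in C} \add Q_j \;=\; \sum_{j \in C} P^n_j[d] \;=\; \bigoplus_{j \in C} P^n_j\,[d],
\]
whose color is $\alpha^n_d = \gamma$. The only real obstacle is conceptual: one must notice that the color of common differences can be controlled by arranging for the common differences to surface as first-position terms; once the invariant $P^k_i[1] = \add P^k_i$ is in place, the rest of the argument is Theorem \ref{maintheorem} with van der Waerden-Brauer replacing van der Waerden in the final step.
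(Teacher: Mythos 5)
Your proposal is correct and is essentially the paper's own proof: the same base progressions $P^0_i=\{a_i,2a_i,\dots,na_i\}$, the same invariant tying the first entry to the common difference (stated per progression rather than for finite sums, which is equivalent by additivity), the same use of $WB(l,c)$ at the end, and the same identity $\add\bigoplus_{j\in C}Q_j=\bigoplus_{j\in C}P^n_j\,[d]$ giving the color $\alpha^n_d=\gamma$.
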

\begin{proof}
We start with $n=WB(l,c)$ and a strictly increasing sequence of positive integers $a_1<a_2<\dots<a_m<\cdots$ with $a_{m+1}>n(a_1+\dots+a_m)$. For $i\in\mathbb{N}$, We put
$P^{0}_{i}=\{a_i, a_i+a_i,\dots,a_i+(n-1)a_i\}$. In this case for all $1\leq k\leq n$ and all $C\in\mathcal{P}_f(\mathbb{N})$ we will have
\begin{equation}\label{eq4}
\add\displaystyle\bigoplus_{i\in C}\!P^k_i=\displaystyle\bigoplus_{i\in C}\!P^k_i\,[1].
\end{equation}
We prove (\ref{eq4}) by induction on $k$. First observe that
\begin{eqnarray*}
\add\displaystyle\bigoplus_{i\in C}\!P^0_i=\displaystyle\bigoplus_{i\in C}\!P^0_i\,[2]-\displaystyle\bigoplus_{i\in C}\!P^0_i\,[1]
                                                            &=&\displaystyle\sum_{i\in C}P^0_i[2]-\displaystyle\sum_{i\in C}P^0_i[1]\\
                                                            &=&\displaystyle\sum_{i\in C}(a_i+a_i)-\displaystyle\sum_{i\in C}a_i\\
                                                            &=&\displaystyle\sum_{i\in C}a_i=\displaystyle\sum_{i\in C}P^0_i[1]
                                                            =\displaystyle\bigoplus_{i\in C}\!P^0_i\,[1].
\end{eqnarray*}
Also for $k+1$, recall the subsets $B_i$ in definition of the arithmetic progressions $P^{k+1}_i$, so we have
\begin{eqnarray*}
\add\displaystyle\bigoplus_{i\in C}\!P^{k+1}_i&=&\add\displaystyle\bigoplus_{i\in C}\!\displaystyle\bigoplus_{j\in B_i}\!P^{k}_j
                                                                      =\add\displaystyle\bigoplus_{i\in D }\!P^{k}_i\\
                                              &=&\displaystyle\bigoplus_{i\in D }\!P^{k}_i\,[1]=\displaystyle\bigoplus_{i\in C}\!P^{k+1}_i\,[1]
\end{eqnarray*}
where $D=\bigcup_{i\in C}B_i$. This proves (\ref{eq4}). The proof now proceeds as in the proof of Theorem \ref{maintheorem}, in particular (\ref{new}) can be proved easily for these new $P^0_i$. Now recall $P_1^n[1],P_1^n[2],\dots,P_1^n[n]$ so that for $s\in\{1,\dots,n\}$ and $C\in\mathcal{P}_f(\mathbb{N})$ we have
\[
\mathbf{c}\big{(}\displaystyle\bigoplus_{i\in C}P_i^{n}\,[s]\big{)}=\alpha_s^{n}.
\]
Through induced coloring and this time using $=WB(l,c)$ we obtain $\gamma\in[c]$ and positive integers $a, d$ such that
\[
\alpha^n_d=\alpha^n_a=\alpha^n_{a+d}=\cdots=\alpha^n_{a+(l-1)d}=\gamma.
\]
Again define the desire arithmetic progressions $Q_i, i\in\mathbb{N}$ by
\[
Q_i=\big{\{}P_i^n[a],P_i^n[a+d],\dots, P_i^n[a+(l-1)d]\big{\}}.
\]
Thus for all $C\in\mathcal{P}_f(\mathbb{N})$ we have
\begin{eqnarray*}
\add\displaystyle\bigoplus_{i\in C}\!Q_i&=&\displaystyle\bigoplus_{i\in C}\!Q_i\,[2]-\displaystyle\bigoplus_{i\in C}\!Q_i\,[1]
                                                         =\displaystyle\sum_{i\in C}Q_i[2]-\displaystyle\sum_{i\in C}Q_i[1]\\
                                        &=&\displaystyle\sum_{i\in C}P_i^n[a+d]-\displaystyle\sum_{i\in C}P_i^n[a]=\displaystyle\sum_{i\in C}\big{(}P_i[a+d]-P_i[a]\big{)}\\
                                        &=&\displaystyle\sum_{i\in C}\displaystyle\sum_{t=1}^d\big{(}P_i^n[a+t]-P_i^n[a+(t-1)]\big{)}=\displaystyle\sum_{i\in C}\displaystyle\sum_{t=1}^d\add P_i^n\\
                                        &=&\displaystyle\sum_{i\in C}d.\add P_i^n=d\displaystyle\sum_{i\in C}\add P_i^n=d.\add\displaystyle\bigoplus_{i\in C}\!P^n_i\\
                                        &=&\displaystyle\bigoplus_{i\in C}\!P^n_i\,[1]+(d-1)\add\displaystyle\bigoplus_{i\in C}\!P^n_i=\displaystyle\bigoplus_{i\in C}\!P^n_i\,[d].
\end{eqnarray*}
Note that in the second and third equations from the end we have respectively used (\ref{eq4}) and the easily checked fact $\displaystyle\sum_{i\in C}\add P_i^n=\add\displaystyle\bigoplus_{i\in C}\!P^n_i$. So we conclude that
\[
\mathbf{c}\big{(}\!\add\displaystyle\bigoplus_{i\in C}\!Q_i\big{)}=\mathbf{c}\big{(}\!\displaystyle\bigoplus_{i\in C}\!P^n_i\,[d]\big{)}=\alpha^n_d=\gamma,
\]
and the rest of the proof is the same as the proof of Theorem \ref{maintheorem}.
\end{proof}

\section{Tower Bounds for the Finite Case}

In this section we prove
\begin{theorem}\label{thirdtheorem}
For positive integers $n,c$ and $l\geq3$, let $f(n,l,c)$ be the least positive integer $p$ such that whenever $\mathbf{c}$ is a $c$-coloring of $[p]$, then there are $l$-term arithmetic progressions $Q_1,Q_2,\dots,Q_n$ such that
\begin{itemize}
\item[(i)] $Q_1\prec\cdots\prec Q_n$,
\item[(ii)] $\max (Q_1\oplus\cdots\oplus Q_n)\leq p$,
\item[(iii)] there is $\gamma\in[c]$ such that for all $C\in\mathcal{P}^{+}([n])$ and all $s\in\{1,\dots,l\}$ we have
\[
\mathbf{c}\big{(}\!\displaystyle\bigoplus_{i\in C}\!Q_i\,[s]\big{)}=\gamma.
\]
Then $f(n,l,c)$ is a tower function.
\end{itemize}
\end{theorem}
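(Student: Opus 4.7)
The plan is to re-run the argument of Theorem \ref{maintheorem} in a quantitative form, but to collapse its $n_0=W(l,c)$ successive applications of Hindman's theorem into a \emph{single} application of the finitary Hindman theorem (Theorem \ref{dut}) with $c^{n_0}$ colors. The iterated use of $\hind$ in the na\"ive finitization is precisely what lifts the bound into the WOW class; using $\hind$ only once, at the one-shot cost of passing from $c$ to $c^{n_0}$ colors, keeps the bound a tower.

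Concretely, set $n_0=W(l,c)$, which is tower in $l,c$ by Gowers, and $M=\hind(n,c^{n_0})$, still tower in $n,l,c$. Pick $a_i=2^{i-1+K}$ for $1\le i\le M$ with $K=\lceil\log_2(n_0M)\rceil$, so that the $a_i$ are pairwise power-disjoint and the $n_0$-term APs $P^0_i=\{a_i,a_i+1,\dots,a_i+(n_0-1)\}$ satisfy $\bigoplus_{i\in B_1}P^0_i\prec\bigoplus_{i\in B_2}P^0_i$ whenever $B_1<B_2$. For $C\in\mathcal{P}^+([M])$ one has $\bigoplus_{i\in C}P^0_i\,[s]=v_C+(s-1)|C|$ with $v_C=\sum_{i\in C}a_i$; since each $a_i$ is a single power of $2$, $|\pow(v_C)|=|C|$, so $|C|$ is recoverable from $v_C$. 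This lets me define a $c^{n_0}$-coloring $\mathbf{c}^\ast$ of $FS(\{a_1,\dots,a_M\})$ by
\[
\mathbf{c}^\ast(v)=\bigl(\mathbf{c}\bigl(v+(s-1)|\pow(v)|\bigr)\bigr)_{s=1}^{n_0},
\]
valid as long as $p\ge\sum_{i=1}^M a_i+(n_0-1)M=2^{O(M)}$.

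Applying Theorem \ref{dut} to $\mathbf{c}^\ast$ yields $B_1<\cdots<B_n$ in $\mathcal{P}^+([M])$ and a tuple $(\alpha_1,\dots,\alpha_{n_0})\in[c]^{n_0}$ such that $\mathbf{c}^\ast(v_C)=(\alpha_1,\dots,\alpha_{n_0})$ for every $C\in NU\{B_1,\dots,B_n\}$. Then $P^1_j:=\bigoplus_{i\in B_j}P^0_i$ are $n_0$-term APs with $P^1_1\prec\cdots\prec P^1_n$ satisfying $\mathbf{c}(\bigoplus_{j\in D}P^1_j\,[s])=\alpha_s$ for every nonempty $D\subseteq[n]$ and every $s\in[n_0]$. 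Since $n_0=W(l,c)$, van der Waerden applied to the induced $c$-coloring $s\mapsto\alpha_s$ of $[n_0]$ produces $\gamma\in[c]$ and $a,d$ with $\alpha_{a+td}=\gamma$ for $0\le t\le l-1$, and $Q_j:=\{P^1_j[a],P^1_j[a+d],\dots,P^1_j[a+(l-1)d]\}$ finish the proof as at the end of Theorem \ref{maintheorem}. The resulting bound $p=2^{O(M)}=2^{O(\hind(n,c^{W(l,c)}))}$ is a tower function.

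The main obstacle is the soundness of the substitution $|C|\leadsto|\pow(v_C)|$ inside the definition of $\mathbf{c}^\ast$: one needs the map $C\mapsto v_C$ to be injective on $\mathcal{P}^+([M])$ and $|C|$ to be literally readable off from $v_C$, which is exactly what is bought by taking each $a_i$ to be a single power of $2$. The spacing choice $a_i=2^{i-1+K}$ plays the role of the growth condition ``$a_{m+1}>a_1+\cdots+a_m+mn$'' in the infinitary proof, ensuring both that all sums encountered live inside $[p]$ and that the analog of condition (b) from Theorem \ref{maintheorem} persists, so that $Q_1\prec\cdots\prec Q_n$.
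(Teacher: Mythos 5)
Your argument is correct, but it is not the paper's: you and the paper apply the two main ingredients in opposite orders and let opposite ingredients absorb the exponential blow-up in colors. You finitize Theorem \ref{maintheorem} directly by collapsing its $W(l,c)$ coordinates into a single product coloring of $FS(\{a_1,\dots,a_M\})$ with $c^{W(l,c)}$ colors (legitimate because each $a_i$ is a single power of $2$, so $|C|=|\pow(v_C)|$ is readable from the sum $v_C$ itself), apply Theorem \ref{dut} once with $M=\hind(n,c^{W(l,c)})$, and only afterwards apply van der Waerden to the induced $c$-coloring of the $W(l,c)$ positions. The paper does the mirror image: it builds $m=\hind(n,c)$ progressions $P_i=\{2^i,2^i+2^{h_i},\dots,2^i+(q-1)2^{h_i}\}$ engineered so that for each fixed position $s$ the values $P_1[s],\dots,P_m[s]$ are pairwise power-disjoint, first applies van der Waerden with $q=W(l,c^{2^{\hind(n,c)}})$ to the coloring of positions recording the color of every subset-sum at that position, and only then applies Theorem \ref{dut}, with just $c$ colors, to $\{P_1[a],\dots,P_m[a]\}$ at the selected position $a$. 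Both routes use each theorem exactly once and give tower bounds, of the respective shapes $2^{O(\hind(n,c^{W(l,c)}))}$ (yours) and $2^{W(l,c^{2^{\hind(n,c)}})^{3}}$ (the paper's). Your arrangement buys a simpler base construction: generators $a_i=2^{i-1+K}$ and common difference $1$, with no analogue of the paper's per-position power-disjointness claim, since Hindman is applied to $FS$ of the generators themselves and the position information is recovered from $|\pow(v)|$; the paper's arrangement buys that the heavy product coloring is fed to van der Waerden rather than to Hindman, so the Hindman input keeps only $c$ colors. One small point to tidy: the threshold for $p$ should be stated explicitly, e.g.\ $p\ge\sum_{i=1}^{M}a_i+(n_0-1)M$, which with your choice of $K$ is at most $4n_0M2^{M}$ and hence still a tower; the loose ``$=2^{O(M)}$'' hides the dependence on $n_0$ but does not affect correctness.
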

\begin{proof}
Let $q=W(l,c^{2^{\hind(n,c)}})$, we will show that $f(n,l,c)\leq 2^{q^3}$. So from Gower's elementary bounds for the van der Waerden numbers \cite{gowers} and Theorem \ref{dut}, it follows that $f(n,l,c)$ is a tower function. Suppose that $p\geq 2^{q^3}$ and $\mathbf{c}$ is a $c$-coloring of $[p]$. We show that $p$ satisfies the requirements of the theorem. Put $m=\hind(n,c)$. Let $h_i, 1\leq i\leq m$ be positive integers defined by $h_i=(m+i)+(i-1)q$. For $1\leq i\leq m$, we define the $q$-term arithmetic progressions $P_i$ as follows
\[
P_i=\{2^i,2^i+2^{h_i},2^i+2.2^{h_i},\dots,2^i+(q-1)2^{h_i}\}.
\]
Clearly $P_1\prec P_2\prec\cdots\prec P_m$. We claim that for each $1\leq s\leq q$, the positive integers $P_1[s],P_2[s],\dots,P_m[s]$ are pairwise power-disjoint. Let $1\leq s\leq q$, $2^u\leq q-1< 2^{u+1}$ and $s-1=2^{u_1}+\dots+2^{u_k}$ with $u_1<u_2<\cdots<u_k$, hence $u_k\leq u\leq q-1$. Also from $i\leq m<h_1\leq h_i$ and
\[
P_i[s]=2^{i}+(s-1)2^{h_i}=2^{i}+2^{u_1+h_i}+\dots+2^{u_k+h_i}
\]
it follows that
\[
\pow(P_i[s])\subseteq\{i,h_i,h_i+1,\dots,h_i+(q-1)\}=:A_i
\]
for $1\leq i\leq m$. Now to prove the claim it would be enough to show that $A_1,\dots,A_m$ are pairwise disjoint. In fact we show that
\[
\{1,2,\dots,m\}<A_1-\{1\}<A_2-\{2\}<\cdots<A_m-\{m\}
\]
which easily implies the disjointness of $A_1,\dots,A_m$. First observe that
\[
\min(A_1-\{1\})=h_1=m+1>m.
\]
Also for $1\leq i\leq m-1$ we have
\begin{eqnarray*}
\min(A_{i+1}-\{i+1\})=h_{i+1}&=&(m+i+1)+iq\\
                             &>&(m+i)+(i-1)q+(q-1)\\
                             &=&h_i+(q-1)\\
                             &=&\max(A_{i}-\{i\}),
\end{eqnarray*}
thus the claim is proved. Also we have
\begin{eqnarray*}
\max\displaystyle\bigoplus_{i\in[m]}\!\!P_i=\displaystyle\bigoplus_{i\in[m]}\!\!P_i\,[q]=\displaystyle\sum_{i\in[m]}P_i[q]
                                                                                 &\leq& m2^m+m(q-1)2^{h_m}\\
                                                                                 &\leq& q.2^q+q^{2}.2^{2m+(m-1)q}\\
                                                                                 &\leq& 2^{2q}+q^2. 2^{2q+q^{2}}\\
                                                                                 &\leq& 2^{2q}+2^q. 2^{2q^{2}}\\
                                                                                 &\leq& 2^{q+1}.2^{2q^{2}}\leq2^{q^3}\leq p.
\end{eqnarray*}
Now we define a coloring $\mathbf{c}^{*}$ on $[q]$ as follows. For $u,v\in[q]$, we put $\mathbf{c}^{*}(u)=\mathbf{c}^{*}(v)$ if for all $B\in\mathcal{P}^+([m])$ we have
\[
\mathbf{c}\big{(}\!\bigoplus_{i\in B}\!P_i\,[u]\big{)}=\mathbf{c}\big{(}\!\bigoplus_{i\in B}\!P_i\,[v]\big{)}.
\]
Obviously the number of colors is $c^{2^m-1}$, so from $q=W(l,c^{2^m})$ it follows that there are $a, a+d,\dots, a+(l-1)d$ in $\{1,2,\dots,q\}$ such that
\[
\mathbf{c}^{*}(a)=\mathbf{c}^{*}(a+d)=\cdots=\mathbf{c}^{*}(a+(l-1)d)
\]
which means that for all $B\in\mathcal{P}^+([m])$ and all $k_1,k_2\in\{0,\dots,l-1\}$ we have
\[
\mathbf{c}\big{(}\!\bigoplus_{i\in B}\!P_i\,[a+k_1d]\big{)}=\mathbf{c}\big{(}\!\bigoplus_{i\in B}\!P_i\,[a+k_2d]\big{)}.
\]
We denote the above color by $\pi(B)$. So we have the well-defined function
\[
\pi\colon\mathcal{P}^+([m])\longrightarrow[c].
\]
Now consider the following $m$-elements set of power-disjoint (due to the claim) positive integers
\[
\big{\{}P_1[a],P_2[a],\dots,P_m[a]\big{\}}.
\]
From $m=\hind(n,c)$ we infer that there exist $B_1<B_2<\cdots<B_n$ in $\mathcal{P}^+([m])$ and $\gamma\in[c]$ so that for all $C\in NU\{B_1,\dots,B_n\}$ we have
\[
\pi(C)=\mathbf{c}\big{(}\!\sum_{i\in C}P_i[a]\big{)}=\gamma.
\]
The desired arithmetic progressions $Q_1,\dots,Q_n$ are defined as follows. For $1\leq i\leq n$, we set
\[
Q_i=\big{\{}\!\!\bigoplus_{j\in B_i}\!P_j\,[a],\bigoplus_{j\in B_i}\!P_j\,[a+d],\ldots,\bigoplus_{j\in B_i}\!P_j\,[a+(l-1)d]\big{\}}.
\]
Obviously $Q_1\prec Q_2\prec\cdots\prec Q_n$ and from $B_1<B_2<\cdots<B_n$ it is easily seen that
\[
\max(Q_1\oplus\cdots\oplus Q_n)\leq\max(P_1\oplus\cdots\oplus P_m)\leq p.
\]
Now for $C\in\mathcal{P}^+([n])$ and $1\leq s\leq l$ we have
\begin{eqnarray*}
\mathbf{c}\big{(}\!\bigoplus_{i\in C}\!Q_i\,[s]\big{)}=\mathbf{c}\big{(}\!\sum_{i\in C}Q_i[s]\big{)}
                                                             &=&\mathbf{c}\big{(}\!\sum_{i\in C}\bigoplus_{j\in B_i}\!P_j\,[a+(s-1)d]\big{)}\\
                                                             &=&\mathbf{c}\big{(}\!\sum_{i\in C}\sum_{j\in B_i}P_j[a+(s-1)d]\big{)}\\
                                                             &=&\mathbf{c}\big{(}\!\sum_{i\in D}P_i[a+(s-1)d]\big{)}=\pi(D)=\gamma,
\end{eqnarray*}
where $D=\bigcup_{i\in C}B_i\in NU\{B_1,\dots,B_n\}$. This finishes the proof of the theorem.
\end{proof}


\bibliography{reference}
\bibliographystyle{plain}
\end{document}